\documentclass[12pt]{amsart}

\usepackage[utf8]{inputenc}
\usepackage[T1]{fontenc}
\usepackage{lmodern}
\usepackage[margin=1in]{geometry}
\usepackage{amsmath,amssymb,amsthm,amsfonts,mathrsfs}
\usepackage{hyperref}
\usepackage{xcolor}
\usepackage{enumitem}
\usepackage{mathtools}

\hypersetup{
  colorlinks=true,
  linkcolor=blue,
  citecolor=blue,
  urlcolor=blue
}

\newtheorem{theorem}{Theorem}[section]
\newtheorem{proposition}[theorem]{Proposition}
\newtheorem{lemma}[theorem]{Lemma}
\newtheorem{corollary}[theorem]{Corollary}
\newtheorem{definition}[theorem]{Definition}
\newtheorem{remark}[theorem]{Remark}
\newtheorem{example}[theorem]{Example}

\newcommand{\GM}{\mathbb{G}_{\mathrm{m}}}

\newcommand{\C}{\mathbb C}
\newcommand{\Z}{\mathbb Z}
\newcommand{\Der}{\operatorname{Der}}
\newcommand{\Aut}[1]{\operatorname{Aut}}
\newcommand{\End}{\operatorname{End}}
\newcommand{\Spec}{\operatorname{Spec}}
\newcommand{\Frac}{\operatorname{Frac}}
\newcommand{\Span}{\operatorname{Span}}
\newcommand{\SSD}{\operatorname{SSD}}
\newcommand{\LFD}{\operatorname{LFD}}
\newcommand{\id}{\mathrm{id}}

\title{Semisimple derivations, rational slices, and kernels over affine domains}

\author{Luis Cid}
\address{Instituto de Matem\'atica y F\'isica, Universidad de Talca, Casilla 721, Talca, Chile}
\email{luis.cid@utalca.cl}

\date{\today}

\subjclass[2020]{13N15; 14R20, 13A50}
\keywords{semisimple derivation, rational slice, $\GM$-action, invariant ring}

\begin{document}

\begin{abstract}
Let $k$ be an algebraically closed field of characteristic zero and let $B$ be a finitely generated $k$-domain. We study semisimple derivations on $B$, with special emphasis on those whose eigenvalues are integers. For such derivations, after passing to the field of fractions and choosing a rational slice $s$ with $D(s)=s$, we describe the kernel of $D$ explicitly in terms of semi-invariant generators. We also obtain descriptions of the kernel on suitable localizations of $B$ and on $B$ itself by intersection. Several basic properties of semisimple derivations and their behavior under conjugation are also discussed.
\end{abstract}

\maketitle

\section{Introduction}

Let $k$ be an algebraically closed field of characteristic zero, let $B$ be a finitely generated $k$-domain, and set $X=\Spec(B)$, $K=\Frac(B)$.
Semisimple derivations on $B$ are closely related to regular $\GM$-actions on $X$. More precisely, such actions correspond to semisimple derivations whose eigenvalues are integers; see \cite{CiLi22}. If
\[
B=\bigoplus_{m\in \Z} B_m
\]
is the grading induced by a regular $\GM$-action, then the associated derivation $D$ is given by
\[
D(b)=m b \qquad \text{for every } b\in B_m,
\]
and the invariant ring is
\[
B^{\GM}=B_0=\ker(D).
\]

The study of $\GM$-actions is closely related to the linearization problem. In dimension two, regular $\GM$-actions on the affine plane are linearizable \cite{Gu62}, and semisimple automorphisms and semisimple derivations on $k[x,y]$ are linearizable up to conjugacy \cite{FurMa2010,Now94}. In dimension three, algebraic $\C^*$-actions on $\C^3$ are also known to be linearizable; this was established in work of Kaliman, Koras, Makar-Limanov, and Russell \cite{KKMR97}. We mention this background only as motivation, since the present article is concerned with a different aspect of semisimple derivations, namely the structure of their kernels.

For locally nilpotent derivations, slices are a classical tool in the study of kernels and associated additive group actions. Our goal is to show that an analogous multiplicative mechanism is available for semisimple derivations with integral eigenvalues, after passing to the field of fractions and choosing an element
\[
s\in K \qquad \text{such that} \qquad D(s)=s.
\]

If $b_1,\dots,b_n$ are semi-invariant generators satisfying
\[
D(b_i)=\lambda_i b_i,
\]
then the elements
\[
b_i s^{-\lambda_i}
\]
belong to the kernel of $D$, and in suitable localizations they generate the full kernel. This leads to explicit descriptions of $\ker(D)$ in the field of fractions, in localizations of $B$, and in $B$ itself by intersection.

The article is organized as follows. In Section~2 we recall basic facts on locally finite and semisimple derivations and their extensions to localizations and fields of fractions. In Section~3 we discuss semi-invariants and the weight decomposition associated with a semisimple derivation. In Section~4 we describe kernels after adjoining a rational slice in the field of fractions, and we also recover the kernel on suitable localizations and on $B$ itself by intersection.

\section{Preliminaries on locally finite and semisimple derivations}

Throughout the paper, $k$ denotes an algebraically closed field of characteristic zero, and $B$ denotes a finitely generated $k$-domain. We write $K=\Frac(B)$ for its field of fractions.

\begin{definition}
Let $V$ be a $k$-vector space and let $T\in \End_k(V)$.
\begin{enumerate}[label=\textup{(\arabic*)}]
    \item The operator $T$ is called \emph{locally finite} if for every $v\in V$, the vector subspace
    \[
    \Span_k\{T^n(v)\mid n\geq 0\}
    \]
    is finite-dimensional.

    \item The operator $T$ is called \emph{semisimple} if $V$ is the direct sum of its eigenspaces:
    \[
    V=\bigoplus_{\lambda\in k} V_\lambda(T),
    \qquad
    V_\lambda(T):=\{v\in V\mid T(v)=\lambda v\}.
    \]
\end{enumerate}
\end{definition}

\begin{definition}
A derivation $D\in \Der_k(B)$ is called \emph{locally finite} if it is locally finite as a $k$-linear endomorphism of $B$. It is called \emph{semisimple} if it is semisimple as a $k$-linear endomorphism of $B$.
\end{definition}

We denote by $\LFD(B)$ the set of locally finite derivations of $B$, by $\SSD(B)$ the set of semisimple derivations of $B$, and by $\SSD^{*}(B)$ the set of semisimple derivations on $B$ whose eigenvalues are integers.

\begin{remark}
Every semisimple derivation is locally finite. Indeed, since $D$ is semisimple, every $b\in B$ is a finite sum of eigenvectors of $D$, so the iterated images $D^n(b)$ are all contained in the finite-dimensional span of those eigenvectors.
\end{remark}

\begin{lemma}
Let $D\in \Der_k(B)$ and let $S\subseteq B$ be a multiplicative subset such that $0\notin S$. Then $D$ extends uniquely to a derivation
\[
D\colon S^{-1}B\to S^{-1}B
\]
given by
\[
D\!\left(\frac{b}{s}\right)=\frac{D(b)s-bD(s)}{s^2}.
\]
In particular, $D$ extends uniquely to a derivation of $K=\Frac(B)$. By abuse of notation, we denote all these extensions again by $D$.
\end{lemma}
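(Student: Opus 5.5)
The plan is the standard two-part argument: show any extension must be given by the displayed quotient-rule formula, which gives uniqueness, and then check that this formula is well defined and is a $k$-derivation, which gives existence.

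\emph{Uniqueness.} Let $\tilde D$ be any derivation of $S^{-1}B$ extending $D$. For $s\in S$, apply the Leibniz rule to $1=s\cdot s^{-1}$. Since $\tilde D(1)=0$, this gives $0=D(s)s^{-1}+s\tilde D(s^{-1})$, so $\tilde D(s^{-1})=-D(s)/s^2$. Then $\tilde D(b/s)=\tilde D(b\cdot s^{-1})=D(b)/s-bD(s)/s^2$, which is exactly the stated formula. Hence any extension is forced.

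\emph{Existence.} Define $\tilde D(b/s)$ by the formula and check three things.

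First, well-definedness. Since $B$ is a domain, $b/s=b'/s'$ means $bs'=b's$. Apply $D$ to this identity:
\[
D(b)s'+bD(s')=D(b')s+b'D(s).
\]
We must show $(D(b)s-bD(s))s'^2=(D(b')s'-b'D(s'))s^2$. Multiply the differentiated identity by $ss'$ and substitute $b's=bs'$, i.e.\ $b'=bs'/s$ after clearing denominators. Cross-multiplying and using both relations should reduce the difference to zero. This bookkeeping is the only real obstacle, and it is routine. A cleaner way, which I would actually use, is to work inside $K$: write $b/s=(bs')/(ss')$. It then suffices to check that the formula is invariant under $b/s\mapsto (bt)/(st)$ for $t\in S$, and this is a one-line Leibniz computation. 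Since $b/s=b'/s'$ gives $(bs')/(ss')=(b's)/(ss')$ as the same numerator over the same denominator, well-definedness follows.

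Second, additivity and $k$-linearity. Write both fractions over a common denominator and expand; this is immediate.

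Third, the Leibniz rule. For $(b/s)(c/t)=bc/(st)$, expand $D(bc)st-bcD(st)$ using the Leibniz rule in $B$. It equals
\[
\bigl(D(b)s-bD(s)\bigr)ct+bs\bigl(D(c)t-cD(t)\bigr),
\]
and dividing by $s^2t^2$ gives $\tilde D(b/s)\,(c/t)+(b/s)\,\tilde D(c/t)$.

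Finally, the extension agrees with $D$ on $B$, since $\tilde D(b/1)=D(b)$. Taking $S=B\setminus\{0\}$, which is multiplicative because $B$ is a domain, gives the unique extension to $K$.

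The only step needing care is the well-definedness check. The reduction to invariance under multiplying numerator and denominator by the same $t\in S$ disposes of it.
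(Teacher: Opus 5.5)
Your proposal is correct and is the standard quotient-rule argument that the paper does not write out but simply calls standard, citing Freudenburg's book: uniqueness from applying Leibniz to $s\cdot s^{-1}=1$, and existence from checking well-definedness, additivity and the Leibniz rule. Your reduction of well-definedness to invariance under $b/s\mapsto bt/(st)$, together with $bs'=b's$ in a domain, settles that step completely, so the unfinished ``direct bookkeeping'' route is not needed.
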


\begin{proof}
This is standard and follows from the quotient rule; see, for instance, \cite[Chapter~1]{Fre06}.
\end{proof}

\begin{lemma}
Let $D\in \Der_k(B)$ and let $0\neq b\in B$ satisfy
\[
D(b)=\lambda b
\]
for some $\lambda\in k$. Then in $K$ one has
\[
D(b^{-1})=-\lambda b^{-1}.
\]
More generally,
\[
D(b^m)=m\lambda b^m \qquad \text{for every } m\in \Z.
\]
\end{lemma}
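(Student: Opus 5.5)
The plan is to work entirely inside $K=\Frac(B)$, using the extension of $D$ to $K$ from the preceding lemma together with the quotient rule. Since $B$ is a domain and $b\neq 0$, the element $b^{-1}$ exists in $K$ and satisfies $b\cdot b^{-1}=1$. The first step is to apply $D$ to this identity. The Leibniz rule holds for the extended derivation, and $D(1)=0$ because $D(1)=D(1\cdot 1)=2D(1)$. This gives
\[
D(b)\,b^{-1}+b\,D(b^{-1})=0 .
\]
Substituting $D(b)=\lambda b$ yields $\lambda + b\,D(b^{-1})=0$, and multiplying by $b^{-1}$ gives $D(b^{-1})=-\lambda b^{-1}$. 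One could equally read this off the explicit formula $D(1/b)=(0\cdot b-1\cdot D(b))/b^2=-\lambda b/b^2$ from the previous lemma.

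For the general statement I will split into the cases $m\ge 0$ and $m<0$.

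\textbf{Case $m\ge 0$.} I will induct on $m$. The base case $m=0$ is $D(1)=0$, which matches $0\cdot\lambda\cdot 1$. For the inductive step, the Leibniz rule gives
\[
D(b^{m+1})=D(b^m)\,b+b^m D(b)=m\lambda b^{m+1}+\lambda b^{m+1}=(m+1)\lambda b^{m+1}.
\]

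\textbf{Case $m<0$.} Write $m=-n$ with $n>0$, so that $b^m=(b^{-1})^n$. The element $b^{-1}$ is an eigenvector of the extended derivation on $K$ with eigenvalue $-\lambda$, by the first part. The inductive argument above used only the Leibniz rule and the eigenvector relation, so it applies verbatim in $K$ to $b^{-1}$. This gives $D\bigl((b^{-1})^n\bigr)=n(-\lambda)(b^{-1})^n=m\lambda b^m$.

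There is no real obstacle. The only point needing care is that all computations take place in $K$, where the extension of $D$ is a genuine derivation by the preceding lemma, so the Leibniz rule is available for negative powers.
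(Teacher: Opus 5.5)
Your proof is correct and matches the paper's argument for $D(b^{-1})$ and for the case $m\ge 0$. The only difference is cosmetic, in the case $m=-n<0$: the paper applies $D$ to $b^n b^{-n}=1$ and uses the formula for $b^n$, whereas you rerun the nonnegative-power induction on $b^{-1}$, an eigenvector of weight $-\lambda$ in $K$; both are equally valid.
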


\begin{proof}
Since
\[
0=D(1)=D(bb^{-1})=D(b)b^{-1}+bD(b^{-1}),
\]
we get
\[
0=\lambda+bD(b^{-1}),
\]
hence
\[
D(b^{-1})=-\lambda b^{-1}.
\]

For $m\ge 0$, the formula follows by induction. For $m=-n<0$ with $n\ge 1$, apply $D$ to
\[
b^n b^{-n}=1.
\]
Using Leibniz' rule and the already known formula for $b^n$, we obtain
\[
0=D(b^n b^{-n})=D(b^n)b^{-n}+b^nD(b^{-n})
=n\lambda\, b^n b^{-n}+b^nD(b^{-n}),
\]
so
\[
0=n\lambda+b^nD(b^{-n}),
\]
and therefore
\[
D(b^{-n})=-n\lambda\, b^{-n}.
\]
Thus $D(b^m)=m\lambda b^m$ for every $m\in\Z$.
\end{proof}

\begin{proposition}
Let $\varphi\in \Aut_k(B)$. If $D\in \SSD(B)$, then
\[
\varphi D\varphi^{-1}\in \SSD(B).
\]
\end{proposition}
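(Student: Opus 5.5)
We prove this in two steps: first that $E:=\varphi D\varphi^{-1}$ is a $k$-derivation of $B$, and then that $B$ decomposes into eigenspaces of $E$ by transporting the eigenspace decomposition of $D$ along $\varphi$.

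\emph{Step 1: $E$ is a derivation.} Clearly $E$ is $k$-linear, being a composition of $k$-linear maps. For $a,b\in B$, write $a=\varphi(a')$ and $b=\varphi(b')$. Since $\varphi^{-1}$ is a ring homomorphism,
\[
E(ab)=\varphi\bigl(D(a'b')\bigr)=\varphi\bigl(D(a')b'+a'D(b')\bigr)=E(a)\,b+a\,E(b),
\]
where the last equality uses that $\varphi$ is multiplicative. Hence $E\in\Der_k(B)$.

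\emph{Step 2: $E$ is semisimple.} The key point is the identity
\[
\varphi\bigl(B_\lambda(D)\bigr)=B_\lambda(E)\qquad\text{for every }\lambda\in k,
\]
where $B_\lambda(T)$ denotes the $\lambda$-eigenspace of $T$ as in the definition. For the inclusion $\subseteq$: if $D(v)=\lambda v$, then $E(\varphi(v))=\varphi(D(v))=\lambda\varphi(v)$. For $\supseteq$: apply the same argument with $\varphi^{-1}$ in place of $\varphi$, using $D=\varphi^{-1}E\varphi$. Now apply the $k$-linear bijection $\varphi$ to the decomposition $B=\bigoplus_\lambda B_\lambda(D)$. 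A linear isomorphism carries an internal direct sum decomposition to an internal direct sum decomposition of its image, and the image is all of $B$. This gives $B=\bigoplus_\lambda B_\lambda(E)$, so $E\in\SSD(B)$.

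No step is a real obstacle; the only point needing care is to check the equality of eigenspaces in both directions, so that the transported sum is exactly the eigenspace decomposition of $E$. The argument also shows that $E$ has the same eigenvalues as $D$. Consequently the same conclusion holds for $\SSD^{*}(B)$.
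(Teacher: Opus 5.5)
Your proof is correct and follows essentially the same route as the paper. The paper transports a basis of $D$-eigenvectors $\{a_i\}$ to the basis $\{\varphi(a_i)\}$ of $\varphi D\varphi^{-1}$-eigenvectors, whereas you transport the eigenspace decomposition $B=\bigoplus_\lambda B_\lambda(D)$ itself; you also check explicitly that $\varphi D\varphi^{-1}$ is a derivation, a point the paper leaves implicit.
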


\begin{proof}
Let $\{a_i\}_{i\in I}$ be a basis of eigenvectors of $D$, with
\[
D(a_i)=\lambda_i a_i.
\]
Then $\{\varphi(a_i)\}_{i\in I}$ is again a basis of $B$, and
\[
(\varphi D\varphi^{-1})(\varphi(a_i))
=
\varphi(D(a_i))
=
\lambda_i\varphi(a_i).
\]
Hence $\varphi D\varphi^{-1}$ is semisimple.
\end{proof}

\begin{proposition}
Let $D_1,D_2\in \SSD(B)$ and suppose that
\[
D_1D_2=D_2D_1.
\]
Then
\[
D_1+D_2\in \SSD(B).
\]
\end{proposition}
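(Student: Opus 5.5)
The plan is to show that $B$ decomposes as a direct sum of simultaneous eigenspaces of $D_1$ and $D_2$. Once this is done, $D_1+D_2$ acts on the simultaneous eigenspace for $(\lambda,\mu)$ by the scalar $\lambda+\mu$. Grouping these pieces by the value $\lambda+\mu$ then writes $B$ as the direct sum of eigenspaces of $D_1+D_2$.

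\textbf{Step 1: $D_2$ preserves the eigenspaces of $D_1$.} For $\lambda\in k$ write $B_\lambda=V_\lambda(D_1)$. If $b\in B_\lambda$, commutativity gives
\[
D_1(D_2 b)=D_2(D_1 b)=\lambda D_2(b),
\]
so $D_2(B_\lambda)\subseteq B_\lambda$.

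\textbf{Step 2: the restriction of $D_2$ to each $B_\lambda$ is semisimple.} This is the step requiring care, since $B_\lambda$ may be infinite-dimensional. Take $b\in B_\lambda$. Since $D_2$ is semisimple on $B$, we can write $b=\sum_j c_j$ with $D_2(c_j)=\mu_j c_j$ and the $\mu_j$ pairwise distinct. Decompose each $c_j$ further as $c_j=\sum_\nu c_{j,\nu}$ with $c_{j,\nu}\in B_\nu$. By Step 1 applied to $D_1$ and $D_2$ with their roles exchanged, $D_1$ preserves $V_{\mu_j}(D_2)$. Hence each $c_j$ lies in $V_{\mu_j}(D_2)$, which is $D_1$-stable.

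I would handle this via the Lagrange interpolation argument. The sum $\sum_j c_j=b$ lies in the finite-dimensional space $W=\Span_k\{D_2^n b\mid n\ge 0\}$. Each $c_j$ is recovered as $p_j(D_2)(b)$, where $p_j$ is the Lagrange polynomial that equals $1$ at $\mu_j$ and $0$ at $\mu_i$ for $i\neq j$. Concretely,
\[
p_j(D_2)(b)=\sum_i p_j(\mu_i)c_i=c_j.
\]
Since $B_\lambda$ is $D_2$-stable by Step 1 and $b\in B_\lambda$, we get $c_j=p_j(D_2)(b)\in B_\lambda$. Thus every element of $B_\lambda$ is a sum of $D_2$-eigenvectors lying in $B_\lambda$. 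In other words,
\[
B_\lambda=\bigoplus_\mu \bigl(B_\lambda\cap V_\mu(D_2)\bigr).
\]

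\textbf{Step 3: conclude.} Combining Step 2 with the decomposition $B=\bigoplus_\lambda B_\lambda$ gives
\[
B=\bigoplus_{\lambda,\mu} B_{\lambda,\mu},\qquad B_{\lambda,\mu}=V_\lambda(D_1)\cap V_\mu(D_2).
\]
On each $B_{\lambda,\mu}$ the operator $D_1+D_2$ acts by $\lambda+\mu$. Setting $E_\nu=\bigoplus_{\lambda+\mu=\nu}B_{\lambda,\mu}$, we have $E_\nu\subseteq V_\nu(D_1+D_2)$ and $B=\bigoplus_\nu E_\nu$.

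Eigenvectors for distinct eigenvalues are linearly independent, so the sum $\sum_\nu V_\nu(D_1+D_2)$ is direct. Since this sum contains $\bigoplus_\nu E_\nu=B$, each $E_\nu$ must equal $V_\nu(D_1+D_2)$, and $B=\bigoplus_\nu V_\nu(D_1+D_2)$. Finally, $D_1+D_2$ is again a derivation, so $D_1+D_2\in\SSD(B)$.
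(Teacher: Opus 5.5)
Your proof is correct and follows the paper's route: decompose $B$ into $D_1$-eigenspaces, use commutativity to see that each $B_\lambda$ is $D_2$-stable, and use that $D_2|_{B_\lambda}$ is semisimple, so that $D_1+D_2$ acts on $B_\lambda$ as $\lambda\,\mathrm{id}+D_2|_{B_\lambda}$. Your Lagrange-interpolation argument supplies the justification the paper leaves implicit, namely that a semisimple operator restricted to an invariant, possibly infinite-dimensional, subspace is again semisimple; the aside about the components $c_{j,\nu}$ in Step 2 is never used and can be deleted.
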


\begin{proof}
Since $D_1$ is semisimple,
\[
B=\bigoplus_{\lambda\in k} B_\lambda,
\qquad
B_\lambda:=\{b\in B\mid D_1(b)=\lambda b\}.
\]
Because $D_1$ and $D_2$ commute, each $B_\lambda$ is stable under $D_2$. Since $D_2$ is semisimple, its restriction to each $B_\lambda$ is semisimple. On the other hand,
\[
D_1|_{B_\lambda}=\lambda\,\id_{B_\lambda},
\]
so
\[
(D_1+D_2)|_{B_\lambda}
=
\lambda\,\id_{B_\lambda}+D_2|_{B_\lambda}
\]
is semisimple. Therefore $D_1+D_2$ is semisimple on $B$.
\end{proof}

\begin{proposition}\label{prop:aD-semisimple}
Let $0\neq D\in \SSD(B)$ and let $a\in \ker(D)$. Then
\[
aD\in \SSD(B)
\quad\Longleftrightarrow\quad
a\in k.
\]
\end{proposition}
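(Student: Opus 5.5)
The easy direction comes first. If $a\in k$, then $aD$ is a scalar multiple of a semisimple operator: every eigenvector $b$ of $D$ with $D(b)=\lambda b$ satisfies $(aD)(b)=a\lambda b$. The eigenvector decomposition of $B$ for $D$ is therefore also one for $aD$, and $aD\in\SSD(B)$. (If $a=0$, then $aD=0$, which is trivially semisimple.)

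For the converse, suppose $aD$ is semisimple with $a\in\ker(D)$, and assume $a\notin k$ in order to reach a contradiction. Since $D\neq 0$ is semisimple, there is an eigenvector $b\neq 0$ with $D(b)=\lambda b$ and $\lambda\neq 0$. Indeed, if every eigenvalue were $0$, then $B=B_0$ and $D=0$. Because $D(a)=0$, the Leibniz rule gives $D(a^n b)=\lambda a^n b$ for all $n\ge 0$. Hence
\[
(aD)^n(b)=\lambda^n a^n b \qquad\text{for all } n\ge 0 .
\]
By the Remark, the semisimple derivation $aD$ is locally finite, so the span of the elements $\lambda^n a^n b$ is finite-dimensional. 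Since $\lambda\neq 0$, the elements $a^n b$ for $n\ge 0$ then span a finite-dimensional space. So there is a nonzero polynomial $p\in k[t]$ with $p(a)b=0$. Because $B$ is a domain and $b\neq 0$, we get $p(a)=0$, so $a$ is algebraic over $k$.

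The remaining step is to see that an element of the domain $B$ algebraic over the algebraically closed field $k$ lies in $k$. The ring $k[a]\subseteq B$ is a domain that is finite-dimensional over $k$, hence a field finite over $k$, hence equal to $k$. Therefore $a\in k$, which contradicts the assumption and proves the converse.

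No step here is a genuine obstacle. The only care needed is to pick an eigenvector with nonzero eigenvalue, which uses $D\neq 0$. Without that choice the orbit $(aD)^n(b)$ collapses and gives no information. Local finiteness alone suffices, so the argument in fact shows the stronger statement that $aD\in\LFD(B)$ forces $a\in k$.
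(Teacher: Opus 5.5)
Your proof is correct and follows essentially the same route as the paper: you pick an eigenvector $b$ with nonzero eigenvalue $\lambda$, note that $aD$ acts on the elements $a^n b$ as $\lambda$ times multiplication by $a$, use finite-dimensionality to get that $a$ is algebraic over $k$, and conclude $a\in k$. The differences are minor. You apply local finiteness of $aD$ directly to the orbit of $b$, whereas the paper restricts the semisimple operator $aD$ to $B_\lambda$. You also justify the last step via $k[a]$ being a finite-dimensional domain, which shows that finite generation of $B$ is not actually needed there.
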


\begin{proof}
Assume first that $a\in k$. Since $D$ is semisimple, there exists a basis
$\{b_i\}_{i\in I}$ of $B$ consisting of eigenvectors of $D$, say
\[
D(b_i)=\lambda_i b_i \qquad (i\in I).
\]
Then
\[
(aD)(b_i)=a\lambda_i b_i,
\]
so the same basis diagonalizes $aD$. Hence $aD$ is semisimple.

Conversely, assume that $aD$ is semisimple. Since $0\neq D\in \SSD(B)$,
there exists a nonzero eigenvalue $\lambda\in k$ and a nonzero element
\[
b\in B_\lambda:=\{x\in B\mid D(x)=\lambda x\}.
\]
Because $a\in \ker(D)$, for every $x\in B_\lambda$ we have
\[
D(ax)=D(a)x+aD(x)=\lambda(ax),
\]
so $B_\lambda$ is stable under multiplication by $a$.

On $B_\lambda$ one has
\[
(aD)(x)=aD(x)=a\lambda x=\lambda M_a(x),
\]
where $M_a$ denotes multiplication by $a$. Thus
\[
(aD)|_{B_\lambda}=\lambda\,M_a|_{B_\lambda}.
\]
Since $aD$ is semisimple, its restriction to $B_\lambda$ is semisimple. As $\lambda\neq 0$,
it follows that $M_a|_{B_\lambda}$ is semisimple, hence locally finite. In particular,
\[
\Span_k\{a^n b\mid n\ge 0\}
\]
is finite-dimensional. Since $B$ is a domain and $b\neq 0$, multiplication by $b$ is injective, so
\[
\Span_k\{a^n\mid n\ge 0\}
\]
is finite-dimensional. Hence $a$ is algebraic over $k$. Because $B$ is a finitely generated
$k$-domain and $k$ is algebraically closed, every element of $B$ algebraic over $k$ belongs to $k$.
Therefore $a\in k$.
\end{proof}

\begin{remark}
The assumption $D\neq 0$ is necessary. Indeed, if $D=0$, then $aD=0$ is semisimple for every
$a\in \ker(D)=B$, even when $a\notin k$.
\end{remark}

\begin{example}
Let $D=x\dfrac{\partial}{\partial x}-y\dfrac{\partial}{\partial y} \in \SSD(k[x,y])$.
Then $D(x)=x$, $D(y)=-y$, so $D$ is semisimple and $D(xy)=0$.
Hence $xy\in \ker(D)$.
Consider $\delta:=(xy)D$.
Although $D$ is semisimple, the derivation $\delta$ is not locally finite and therefore cannot be semisimple.
Indeed, $\delta(x)=x^2y$, $\delta(y)=-xy^2$, and the degrees of the iterates $\delta^n(x)$ and $\delta^n(y)$ grow without bound.
Thus multiplication by a nonconstant invariant may destroy semisimplicity.
\end{example}

\begin{proposition}
Let $D\in \SSD(B)$ and write
\[
B=\bigoplus_{\lambda\in \Lambda(B,D)} B_\lambda,
\qquad
\Lambda(B,D):=\{\lambda\in k\mid B_\lambda\neq 0\}.
\]
Then
\[
D(B)=\bigoplus_{\lambda\in \Lambda(B,D)\setminus\{0\}} B_\lambda.
\]
In particular,
\[
D(B)\subsetneq B.
\]
\end{proposition}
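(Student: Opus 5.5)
The plan is to compute $D$ directly on the eigenspace decomposition and compare images summand by summand. Since $D$ is semisimple, every $b\in B$ can be written as a finite sum $b=\sum_{\lambda} b_\lambda$ with $b_\lambda\in B_\lambda$. Applying $D$ gives
\[
D(b)=\sum_{\lambda}\lambda b_\lambda ,
\]
so the term with $\lambda=0$ drops out. This proves the inclusion $D(B)\subseteq\bigoplus_{\lambda\neq 0}B_\lambda$.

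For the reverse inclusion, it suffices to treat a single homogeneous element, since $D(B)$ is a $k$-subspace. Given $c\in B_\lambda$ with $\lambda\neq 0$, I set $b=\lambda^{-1}c$. Then $b\in B_\lambda$ and $D(b)=\lambda\cdot\lambda^{-1}c=c$. Finite sums of such elements are handled by linearity, so the two sides are equal.

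For the strict inclusion $D(B)\subsetneq B$, I need $B_0\neq 0$. This holds because $1\in B$ is nonzero and $D(1)=0$, so $1\in B_0$. Since the sum over $\Lambda(B,D)$ is direct, we have $B_0\cap\bigoplus_{\lambda\neq0}B_\lambda=0$. Hence $1\notin D(B)$, and so $D(B)\neq B$.

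There is no real obstacle here. The only point needing care is to use directness of the decomposition when concluding $1\notin D(B)$: an expression of $1$ as a sum of components of nonzero weight would contradict uniqueness of the decomposition of $1\in B_0$.
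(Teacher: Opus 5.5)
Your proof is correct and follows essentially the same route as the paper: you compute $D$ eigenspace by eigenspace ($D(B_0)=0$ and $D(B_\lambda)=B_\lambda$ for $\lambda\neq 0$, via $b=\lambda^{-1}c$), then use $0\neq 1\in B_0$ to get $D(B)\neq B$. Your explicit appeal to directness of the decomposition in that last step spells out a point the paper leaves implicit.
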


\begin{proof}
If $b\in B_\lambda$, then $D(b)=\lambda b$. Hence $D(B_\lambda)=0$ if $\lambda=0$,
and $D(B_\lambda)=B_\lambda$ if $\lambda\neq 0$.
Since $B=\bigoplus_{\lambda\in \Lambda(B,D)} B_\lambda$,
it follows that
\[
D(B)=\bigoplus_{\lambda\in \Lambda(B,D)\setminus\{0\}} B_\lambda.
\]
Moreover, $k\subseteq \ker(D)=B_0$, so $B_0\neq 0$. Therefore $D(B)$ cannot equal $B$.
\end{proof}

\section{Semi-invariants and weight decomposition}

Let $D\in \SSD(B)$. For $\lambda\in k$, define
\[
B_\lambda:=\{b\in B\mid D(b)=\lambda b\}.
\]
Elements of $B_\lambda$ will be called \emph{semi-invariants of weight $\lambda$}. In particular,
\[
B_0=\ker(D).
\]

\begin{remark}\label{rem:integer-exponents}
In the sequel, whenever expressions of the form $s^\lambda$ or $s^{-\lambda}$ appear, it is
understood that the relevant eigenvalues belong to $\Z$.
\end{remark}

\begin{proposition}
For every $\lambda,\mu\in k$, one has
\[
B_\lambda B_\mu\subseteq B_{\lambda+\mu}.
\]
In particular,
\[
B_0=\ker(D)
\]
is a $k$-subalgebra of $B$.
\end{proposition}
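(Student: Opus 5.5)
The plan is a direct Leibniz-rule computation. Fix $\lambda,\mu\in k$, and take $b\in B_\lambda$ and $c\in B_\mu$. The Leibniz rule gives
\[
D(bc)=D(b)c+bD(c)=\lambda bc+\mu bc=(\lambda+\mu)bc,
\]
so $bc\in B_{\lambda+\mu}$.

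The notation $B_\lambda B_\mu$ may mean either the set of products $bc$ or their $k$-linear span. Each $B_\nu$ is a $k$-subspace of $B$, being the kernel of the $k$-linear map $D-\nu\,\id$. It therefore contains every finite sum of such products, and the inclusion holds in either reading.

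For the second assertion, take $\lambda=\mu=0$. This gives $B_0B_0\subseteq B_0$, so $B_0$ is closed under multiplication. It remains to check the subalgebra axioms.

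\begin{itemize}
\item $B_0$ is a $k$-subspace, as noted above.
\item $B_0$ contains $k$. Indeed, $D(1)=D(1\cdot 1)=2D(1)$ forces $D(1)=0$, and $k$-linearity then gives $D(a)=aD(1)=0$ for $a\in k$. So $1\in B_0$ and $k\subseteq B_0$.
\end{itemize}

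Hence $B_0=\ker(D)$ is a $k$-subalgebra of $B$.

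None of these steps is a real obstacle. The only point needing care is the identity $D(1)=0$, and it comes straight from the derivation axioms.
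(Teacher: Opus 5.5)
Your proof is correct and follows the paper's argument: the Leibniz computation $D(bc)=(\lambda+\mu)bc$, then specializing to $\lambda=\mu=0$. You also fill in the points the paper calls ``clear'', namely that each $B_\nu=\ker(D-\nu\,\id)$ is a $k$-subspace and that $D(1)=0$ gives $k\subseteq B_0$.
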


\begin{proof}
Let $a\in B_\lambda$ and $b\in B_\mu$. Then
\[
D(ab)=D(a)b+aD(b)=\lambda ab+\mu ab=(\lambda+\mu)ab.
\]
Hence $ab\in B_{\lambda+\mu}$. Taking $\lambda=\mu=0$, we get that $B_0$ is closed under multiplication.
It is clearly closed under addition and contains $k$.
\end{proof}

\begin{proposition}
The ring $B$ decomposes as a direct sum of eigenspaces:
\[
B=\bigoplus_{\lambda\in \Lambda(B,D)} B_\lambda,
\qquad
\Lambda(B,D):=\{\lambda\in k\mid B_\lambda\neq 0\}.
\]
\end{proposition}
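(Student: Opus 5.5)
This is essentially the definition of semisimplicity restated in the notation of this section, so the proof unpacks that definition and checks that the index set may be restricted to $\Lambda(B,D)$. Since $D\in\SSD(B)$, by definition $D$ is semisimple as a $k$-linear endomorphism of $B$. That means
\[
B=\bigoplus_{\lambda\in k} V_\lambda(D),
\]
where $V_\lambda(D)=\{b\in B\mid D(b)=\lambda b\}$. This is exactly the space $B_\lambda$ of semi-invariants of weight $\lambda$ defined at the start of this section. So the first step is to identify $V_\lambda(D)=B_\lambda$ for every $\lambda\in k$.

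The second step is to discard the zero summands. If $\lambda\notin\Lambda(B,D)$, then $B_\lambda=0$, and a zero summand contributes nothing to a direct sum: neither the sum nor its directness changes. Removing these summands therefore gives
\[
B=\bigoplus_{\lambda\in\Lambda(B,D)} B_\lambda .
\]

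For completeness I would also spell out why the sum is direct, independently of the definition. Eigenvectors for pairwise distinct eigenvalues are linearly independent. Concretely, if $\sum_{j=1}^r b_j=0$ with $b_j\in B_{\lambda_j}$ and the $\lambda_j$ distinct, then applying
\[
\prod_{i\neq j}(D-\lambda_i)
\]
kills every term except the $j$-th and yields $\prod_{i\neq j}(\lambda_j-\lambda_i)\,b_j=0$. Since the $\lambda_i$ are distinct, this forces $b_j=0$.

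There is no real obstacle here. The only point needing care is this bookkeeping: that the definition of semisimple already encodes directness, and that restricting the index set to the nonzero eigenspaces preserves the decomposition.
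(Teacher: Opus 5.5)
Your proposal is correct and matches the paper's proof, which simply observes that the decomposition is the definition of semisimplicity (restricting the index set to $\Lambda(B,D)$ only drops zero summands). Your added check that eigenvectors for distinct eigenvalues are linearly independent is valid but not needed, since directness is already part of the definition.
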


\begin{proof}
This is precisely the definition of semisimplicity of $D$ as a $k$-linear endomorphism of $B$.
\end{proof}

\begin{remark}
Since $B$ is a domain, if $0\neq a\in B_\lambda$ and $0\neq b\in B_\mu$, then $ab\neq 0$, so
\[
\lambda,\mu\in \Lambda(B,D)\quad\Longrightarrow\quad \lambda+\mu\in \Lambda(B,D).
\]
\end{remark}

\begin{lemma}\label{lem:kernel-weighted-element}
Let $D\in \SSD(B)$ and let $s\in K$ satisfy
\[
D(s)=s.
\]
If $b\in K$ is a semi-invariant of weight $\lambda$, that is,
\[
D(b)=\lambda b,
\]
with $\lambda\in \Z$, then
\[
bs^{-\lambda}\in \ker(D|_K).
\]
\end{lemma}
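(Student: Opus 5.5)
The plan is a direct Leibniz-rule computation in $K$, using the extension of $D$ to $\Frac(B)$ from the localization lemma in Section~2. First I note that $s\neq 0$ is needed for $s^{-\lambda}$ to make sense. If $\lambda\ge 0$ this is automatic from the expression $s^{-\lambda}$, and I will assume it throughout; the case $s=0$ is degenerate and excluded by the notation.

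The key step is to show that
\[
D(s^{m})=m\,s^{m}\qquad\text{for every } m\in\Z .
\]
The earlier lemma on powers of semi-invariants is stated for $b\in B$. Its proof, however, only uses the identity $b\,b^{-1}=1$ and the Leibniz rule in $K$. So I will either apply it verbatim to $s\in K$ with weight $1$, or repeat the short argument: prove $m\ge 0$ by induction, then handle negative exponents by differentiating $s^n s^{-n}=1$. Taking $m=-\lambda$ gives $D(s^{-\lambda})=-\lambda s^{-\lambda}$.

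Then the Leibniz rule finishes the proof:
\[
D(bs^{-\lambda})=D(b)\,s^{-\lambda}+b\,D(s^{-\lambda})=\lambda b s^{-\lambda}-\lambda b s^{-\lambda}=0,
\]
so $bs^{-\lambda}\in\ker(D|_K)$.

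No step is a real obstacle. The only point needing care is that the powers lemma was formulated for elements of $B$, whereas $s$ lives in $K$. I would handle this by observing that its proof works verbatim in any domain on which $D$ acts as a derivation, in particular in $K$.
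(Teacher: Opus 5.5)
Your proposal is correct and matches the paper's proof: from $D(s)=s$ you obtain $D(s^{-\lambda})=-\lambda s^{-\lambda}$ by the power rule, and then the Leibniz rule gives $D(bs^{-\lambda})=0$. Your remark that the power lemma, which is stated for elements of $B$, applies verbatim to $s\in K$ with $s\neq 0$ is correct, and the paper uses this step without comment.
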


\begin{proof}
Since $D(s)=s$, we have
\[
D(s^{-1})=-s^{-1},
\]
hence
\[
D(s^{-\lambda})=-\lambda s^{-\lambda}
\]
for every $\lambda\in \Z$ (see Remark~\ref{rem:integer-exponents}). Therefore
\[
D(bs^{-\lambda})
=
D(b)s^{-\lambda}+bD(s^{-\lambda})
=
\lambda bs^{-\lambda}-\lambda bs^{-\lambda}=0.
\]
Thus $bs^{-\lambda}\in \ker(D|_K)$.
\end{proof}

\begin{proposition}
Assume that $K=\Frac(B)$ is generated over $k$ by semi-invariants
\[
b_1,\dots,b_n\in K,
\qquad
D(b_i)=\lambda_i b_i
\]
with $\lambda_i\in \Z$. If $s\in K$ satisfies $D(s)=s$, then
\[
K=k(u_1,\dots,u_n,s),
\qquad
u_i:=b_i s^{-\lambda_i}.
\]
Moreover, each $u_i$ belongs to $\ker(D|_K)$.
\end{proposition}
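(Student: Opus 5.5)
The plan is a double inclusion of subfields of $K$, followed by an appeal to Lemma~\ref{lem:kernel-weighted-element} for the final assertion.

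For the inclusion $k(u_1,\dots,u_n,s)\subseteq K$, note that $s\in K$ by hypothesis. Since each $\lambda_i$ is an integer and $s\neq 0$, the power $s^{-\lambda_i}$ is a well-defined element of $K$. Hence each $u_i=b_is^{-\lambda_i}$ lies in $K$.

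One point deserves a remark, namely that $s\neq 0$. If $s=0$, then $D(s)=s$ holds trivially and the expressions $s^{-\lambda_i}$ make no sense. The statement implicitly requires $s$ to be a nonzero element, a \emph{rational slice}, and I will make this explicit at the start of the proof. With $s\neq 0$, the powers $s^{m}$ for $m\in\Z$ are meaningful, as in Remark~\ref{rem:integer-exponents}.

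For the reverse inclusion, I will recover the generators. Inverting the definition of $u_i$ in $K$ gives
\[
b_i=u_i s^{\lambda_i},\qquad i=1,\dots,n.
\]
Since $\lambda_i\in\Z$, the right-hand side is a Laurent monomial in $s$ times $u_i$. It therefore lies in the field $k(u_1,\dots,u_n,s)$, which is closed under inverses. So every $b_i$ lies in $k(u_1,\dots,u_n,s)$. By hypothesis $K=k(b_1,\dots,b_n)$, and $K$ is the smallest subfield containing $k$ and the $b_i$, so $K\subseteq k(u_1,\dots,u_n,s)$. This gives the equality.

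Finally, each $b_i\in K$ is a semi-invariant of integral weight $\lambda_i$ and $D(s)=s$. Lemma~\ref{lem:kernel-weighted-element} therefore applies directly and gives $u_i=b_is^{-\lambda_i}\in\ker(D|_K)$.

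The argument is elementary, and the only real step is the inversion $b_i=u_is^{\lambda_i}$. It depends on the integrality of the $\lambda_i$ and on the nonvanishing of $s$.
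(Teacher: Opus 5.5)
Your proposal is correct and follows essentially the same route as the paper. Both invert $u_i=b_is^{-\lambda_i}$ to get $b_i=u_is^{\lambda_i}\in k(u_1,\dots,u_n,s)$, which yields $K=k(b_1,\dots,b_n)\subseteq k(u_1,\dots,u_n,s)\subseteq K$, and both cite Lemma~\ref{lem:kernel-weighted-element} for $u_i\in\ker(D|_K)$. Your explicit requirement that $s\neq 0$ is a fair point. The paper leaves it implicit here and only uses it later, in the proof of Theorem~\ref{thm:kernel-fraction-field}.
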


\begin{proof}
By the previous lemma, each $u_i$ belongs to $\ker(D|_K)$. Since
\[
b_i=u_i s^{\lambda_i},
\]
every generator $b_i$ of $K$ belongs to $k(u_1,\dots,u_n,s)$. Hence
\[
K=k(b_1,\dots,b_n)\subseteq k(u_1,\dots,u_n,s)\subseteq K.
\]
Therefore equality holds.
\end{proof}

\begin{proposition}\label{prop:local-slice-faithful}
Let $B$ be a finitely generated $k$-domain, and let $D\in \SSD^{*}(B)$
be the semisimple derivation associated with a faithful $\mathbb G_m$-action on $\Spec(B)$.
Then there exists a nonzero semi-invariant $f\in B$
such that, in the localization
$B_f=B[f^{-1}]$,
there exists an element $s\in B_f$ satisfying $D(s)=s$.
\end{proposition}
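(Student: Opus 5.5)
Since the action is faithful, the weights occurring in the grading $B=\bigoplus_{m\in\Z}B_m$ generate the group $\Z$; otherwise every weight would lie in $d\Z$ for some $d\ge 2$, and the $d$-th roots of unity in $\GM$ would act trivially. The plan is to produce a homogeneous element of weight $1$ in some localization at a single semi-invariant, because any such element $s$ satisfies $D(s)=s$ by construction.

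First, because $B$ is finitely generated and $D$ is semisimple, we can choose homogeneous generators $b_1,\dots,b_n$ of $B$ with weights $\lambda_1,\dots,\lambda_n\in\Z$. This is possible since each algebra generator is a finite sum of eigenvectors, and those eigenvectors together still generate $B$. Every homogeneous element of $B$ is a linear combination of monomials in the $b_i$ of the same weight, so the set of weights $\Lambda(B,D)$ lies in the subgroup generated by $\lambda_1,\dots,\lambda_n$. By faithfulness this subgroup is all of $\Z$, so $\gcd(\lambda_1,\dots,\lambda_n)=1$. Discarding the generators with $\lambda_i=0$ does not change the gcd (if all $\lambda_i$ were $0$ the action would be trivial, which is not faithful).

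Next, pick integers $c_i$ with $\sum_i c_i\lambda_i=1$, summing only over those $i$ with $\lambda_i\neq 0$ and $b_i\neq 0$. Put $f=\prod_{\lambda_i\ne 0} b_i$, which is a nonzero semi-invariant because $B$ is a domain and the product rule shows that a product of semi-invariants is a semi-invariant. In $B_f$ every such $b_i$ is invertible, so
\[
s:=\prod_{\lambda_i\neq 0} b_i^{\,c_i}\in B_f
\]
makes sense even when some $c_i$ are negative. By the lemma giving $D(b^m)=m\lambda b^m$ for $m\in\Z$, together with the Leibniz rule, $s$ is a semi-invariant of weight $\sum_i c_i\lambda_i=1$, that is, $D(s)=s$.

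The main obstacle is the first step: justifying rigorously that faithfulness forces the weights to generate $\Z$. To do this, write the action as $t\cdot b=t^m b$ for $b\in B_m$. If every occurring weight lies in $d\Z$, then $\zeta\cdot b=b$ for each $d$-th root of unity $\zeta$, so the subgroup $\mu_d$ acts trivially. Faithfulness then forces $d=1$. This requires fixing the convention that the grading comes from the action as described in the introduction. Everything after that is formal.
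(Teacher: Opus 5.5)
Your proof is correct and follows essentially the same route as the paper: faithfulness forces the weights to generate $\Z$, you take a B\'ezout combination $\sum c_i\lambda_i=1$ of weights of nonzero homogeneous elements, and after inverting a product of semi-invariants the element $s=\prod b_i^{c_i}$ has weight one. The differences are cosmetic. You take the homogeneous elements to be algebra generators, which makes the finiteness of the B\'ezout combination explicit, and you invert all nonzero-weight generators rather than only those with negative exponent, as the paper's $f=\prod_{m_i<0}a_i^{-m_i}$ does.
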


\begin{proof}
Since $D\in \SSD^{*}(B)$, the ring $B$ decomposes as a $\mathbb Z$-graded algebra
\[
B=\bigoplus_{m\in \mathbb Z} B_m,
\]
where $D(b)=mb$ for every $b\in B_m$.

Because the corresponding $\mathbb G_m$-action is faithful, the subgroup of $\Z$
generated by the integers
\[
\{m\in \Z \mid B_m\neq 0\}
\]
must be all of $\Z$. Indeed, if these weights generated a proper subgroup $d\Z$ with $d>1$,
then every homogeneous element would have weight divisible by $d$, so the action would factor through
\[
\GM \to \GM,\qquad t\mapsto t^d.
\]
Consequently the finite subgroup $\mu_d\subseteq \GM$ would act trivially on $B$, hence trivially on $\Spec(B)$, contradicting faithfulness.

Choose homogeneous elements $a_1,\dots,a_r\in B$
with $D(a_i)=\lambda_i a_i$,
$\lambda_i\in \Z$,
such that $\gcd(\lambda_1,\dots,\lambda_r)=1$.
Then there exist integers $m_1,\dots,m_r$ satisfying
\[
m_1\lambda_1+\cdots+m_r\lambda_r=1.
\]

Set
\[
f:=\prod_{m_i<0} a_i^{-m_i}\in B
\]
and
\[
s:=a_1^{m_1}\cdots a_r^{m_r}\in B_f.
\]
Since $D(a_i)=\lambda_i a_i$, we have
\[
D(s)=\left(m_1\lambda_1+\cdots+m_r\lambda_r\right)s=s.
\]
Thus $s\in B_f$ satisfies
\[
D(s)=s.
\]
Therefore, after localizing at the semi-invariant $f$, the derivation admits an element of weight one.
\end{proof}

\begin{remark}
Proposition~\ref{prop:local-slice-faithful} shows that, after localizing at a suitable semi-invariant, one may assume the existence of an element $s$ satisfying
\[
D(s)=s.
\]
This will be the starting point for the description of kernels in the next sections.
\end{remark}

\section{Rational slices and the kernel over the field of fractions}

Let $K=\Frac(B)$, and let $D$ also denote the unique extension of a derivation
$D\in \Der_k(B)$ to $K$.

\begin{remark}\label{rem:CiLi-rational}
Let $D\in \SSD^{*}(B)$. By \cite[Definition 2.5 and Theorem 2.14]{CiLi22},
the extension of $D$ to $K$ is rational semisimple. In particular, by
\cite[Corollary 3.3]{CiLi22}, there exists an element
\[
s\in K
\]
such that
\[
D(s)=s.
\]
Thus every semisimple derivation with integer eigenvalues admits a rational slice after passing to the field of fractions.
\end{remark}

In this section we work in the situation where a rational slice has been fixed and the
field of fractions is generated by semi-invariants.

\begin{theorem}\label{thm:kernel-fraction-field}
Assume that $D\in \SSD^{*}(B)$, and let $s\in K$ be such that $D(s)=s$.
Suppose that $K$ is generated over $k$ by semi-invariants
\[
b_1,\dots,b_n\in K,
\qquad
D(b_i)=\lambda_i b_i,
\qquad
\lambda_i\in \Z.
\]
Then
\[
\ker(D)=k\left(b_1s^{-\lambda_1},\dots,b_ns^{-\lambda_n}\right),
\qquad
K=\ker(D)(s).
\]
\end{theorem}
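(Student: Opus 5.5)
Set $L:=k(u_1,\dots,u_n)$ with $u_i:=b_is^{-\lambda_i}$. Since $\ker(D|_K)$ is a subfield of $K$ containing $k$, and each $u_i\in\ker(D|_K)$ by Lemma~\ref{lem:kernel-weighted-element}, we get $L\subseteq\ker(D)$. The proposition just before Proposition~\ref{prop:local-slice-faithful} gives $K=L(s)$. So it remains to prove the reverse inclusion $\ker(D)\subseteq L$; the identity $K=\ker(D)(s)$ then follows at once.

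The main step is to show that $s$ is transcendental over $\ker(D)$, and this is also where I expect the real difficulty. Write $F:=\ker(D|_K)$ and suppose $s$ is algebraic over $F$, with monic minimal polynomial $P(T)=T^d+c_{d-1}T^{d-1}+\cdots+c_0$, where $c_j\in F$. Applying $D$ to $P(s)=0$ and using $D(c_j)=0$ and $D(s^j)=js^j$ (from the lemma on $D(b^m)$), we obtain
\[
ds^d+\sum_{j<d}jc_js^j=0 .
\]
Subtracting $d\,P(s)=0$ leaves a polynomial relation of degree $<d$ with coefficients $(j-d)c_j$. By minimality all these coefficients vanish, so $c_j=0$ for every $j<d$. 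Then $P(T)=T^d$, which forces $s=0$. That is impossible: $s\neq0$, since otherwise $D(s)=s$ gives nothing, and we need $s\ne0$ for the exponents $s^{-\lambda}$ to make sense. Hence $s$ is transcendental over $F$, and a fortiori over $L$.

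Now take $x\in F$. Since $K=L(s)$ and $s$ is transcendental over $L$, we may write $x=g(s)/h(s)$ with $g,h\in L[T]$ coprime and $h$ monic. The derivation $D$ acts on $L(s)$ by $D(\sum a_js^j)=\sum ja_js^j$ for $a_j\in L$, because $D$ kills $L$. So $D$ restricted to $L(s)\cong L(T)$ is the Euler operator $T\,\frac{d}{dT}$. The condition $D(x)=0$ then reads $T(g'h-gh')=0$, that is, $(g/h)'=0$ in $L(T)$. Since $\operatorname{char}k=0$, this forces $g/h\in L$. Alternatively, expand $g/h$ in $L((T))$ (after a suitable shift, or as Laurent series in $T$): the Euler operator multiplies the coefficient of $T^j$ by $j$, so $x$ equals its constant term and lies in $L$. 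Either way $F\subseteq L$, which proves $\ker(D)=L$ and hence $K=\ker(D)(s)$.

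The computation showing that $D$ acts as $T\,d/dT$ on $L(s)$ is routine. The only delicate points are the transcendence of $s$ over $L$, which is needed to identify $L(s)$ with the rational function field $L(T)$, and the fact that $\operatorname{char}k=0$, which is used to conclude that a rational function with zero derivative is constant.
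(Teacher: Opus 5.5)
Your proof is correct and follows essentially the same route as the paper: you get $K=L(s)$, prove $s$ transcendental by differentiating a minimal polynomial, identify $D$ with the Euler operator $s\,\frac{d}{ds}$ on $L(s)$, and use characteristic zero to conclude $\ker(D)=L$. The differences are cosmetic. You subtract $d\,P(s)$ to kill the leading term, where the paper factors out $s$ to lower the degree. You also obtain transcendence over all of $\ker(D)$ rather than just over $L$.
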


\begin{proof}
Set
\[
u_i:=b_i s^{-\lambda_i}\qquad (i=1,\dots,n).
\]
By Lemma~\ref{lem:kernel-weighted-element}, each $u_i$ belongs to $\ker(D)$. Hence
\[
L:=k(u_1,\dots,u_n)\subseteq \ker(D).
\]

On the other hand, for each $i$ we have
\[
b_i=u_i s^{\lambda_i},
\]
so
\[
K=k(b_1,\dots,b_n)\subseteq L(s)\subseteq K.
\]
Therefore
\[
K=L(s).
\]

It remains to prove that
\[
\ker(D)=L.
\]

We claim that $s$ is transcendental over $L$. Indeed, assume that $s$ is algebraic over $L$.
Choose a nonzero polynomial
\[
P(T)=a_0+a_1T+\cdots+a_mT^m\in L[T]
\]
of minimal degree $m$ such that
\[
P(s)=0.
\]
Since $D$ vanishes on $L$ and satisfies $D(s)=s$, applying $D$ to $P(s)=0$ gives
\[
0=D(P(s))=\sum_{i=1}^m i\,a_i s^i.
\]
Because $s\neq 0$ in the field $K$, we can factor out $s$:
\[
0=\sum_{i=1}^m i\,a_i s^{i-1} = a_1 + 2a_2 s + \cdots + m a_m s^{m-1}.
\]
This is a polynomial relation over $L$ of degree $m-1$ satisfied by $s$, and its leading
coefficient $m a_m$ is nonzero since $\operatorname{char}(k)=0$ and $a_m\neq 0$.
This contradicts the minimality of $m$. Hence $s$ is transcendental over $L$.

Therefore $K=L(s)$ is a rational function field in one variable over $L$, and $D$ acts as the Euler derivation with respect to the parameter $s$, namely
\[
D=s\frac{d}{ds}.
\]
Let $f\in \ker(D)$. Since $K=L(s)$, we may write
\[
f=\frac{P(s)}{Q(s)}
\]
with $P,Q\in L[s]$ and $Q\neq 0$.

Thus
\[
0=D(f)=s\frac{df}{ds}.
\]
Since $K$ is a field and $s\neq 0$, this implies
\[
\frac{df}{ds}=0.
\]
In characteristic zero, the only rational functions in $L(s)$ with zero derivative are the elements of $L$. Hence $f\in L$. Therefore
\[
\ker(D)=L
=
k(u_1,\dots,u_n)
=
k\left(b_1s^{-\lambda_1},\dots,b_ns^{-\lambda_n}\right).
\]

The equality
\[
K=\ker(D)(s)
\]
now follows from
\[
K=L(s)
\qquad\text{and}\qquad
L=\ker(D).
\]
\end{proof}

\begin{proposition}\label{prop:kernel-localization}
Let $D\in \SSD^{*}(B)$ and assume that there exists an element $s\in B$
such that $D(s)=s$.
Let
\[
B=k[b_1,\dots,b_n],
\qquad
D(b_i)=\lambda_i b_i \quad (\lambda_i\in \Z),
\]
and set
\[
B_s:=B[s^{-1}].
\]
Then
\[
\ker(D|_{B_s})=
k\!\left[b_1s^{-\lambda_1},\dots,b_ns^{-\lambda_n}\right].
\]
\end{proposition}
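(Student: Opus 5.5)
Set $u_i:=b_is^{-\lambda_i}\in B_s$ and $A:=k[u_1,\dots,u_n]$. The plan is to prove the two inclusions separately. The inclusion $A\subseteq\ker(D|_{B_s})$ is immediate from Lemma~\ref{lem:kernel-weighted-element}, because each $u_i$ lies in $B_s\subseteq K$ and is killed by $D$. For the reverse inclusion, the idea is to exploit the grading. $B_s$ inherits a $\Z$-grading from $B$: $s$ is homogeneous of weight $1$, so $s^{-1}$ has weight $-1$ by the second lemma of Section~2. Moreover, $D$ acts on $B_s$ as multiplication by $m$ on the weight-$m$ part. Hence $\ker(D|_{B_s})$ is exactly the weight-zero component $(B_s)_0$, and I will show that this component is contained in $A$.

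First, I will check that $D|_{B_s}$ is still semisimple with this grading. Every element of $B_s$ has the form $b s^{-N}$ with $b\in B$. Decomposing $b=\sum_m b_m$ into semi-invariants, each term $b_m s^{-N}$ satisfies $D(b_m s^{-N})=(m-N)\,b_m s^{-N}$. So every element is a finite sum of eigenvectors with integer eigenvalues. Consequently, if $D(f)=0$, then $f$ equals the sum of its weight-zero pieces. It therefore suffices to treat elements of the form $x=b s^{-N}$ with $b\in B$ homogeneous of weight $N$.

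The key step is this. Since $B=k[b_1,\dots,b_n]$ and each $b_i$ is homogeneous, the weight-$N$ component of $B$ is spanned by the monomials $b^{\alpha}=b_1^{\alpha_1}\cdots b_n^{\alpha_n}$ with $\sum\alpha_i\lambda_i=N$. Projecting a polynomial expression onto its weight-$N$ part only discards monomials of other weights. For such a monomial, substituting $b_i=u_is^{\lambda_i}$ gives
\[
b^{\alpha}s^{-N}=u_1^{\alpha_1}\cdots u_n^{\alpha_n}\,s^{\sum\alpha_i\lambda_i-N}=u^{\alpha}\in A.
\]
By linearity, $x\in A$. This gives $\ker(D|_{B_s})\subseteq A$, and hence equality.

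I expect the main obstacle to be the bookkeeping in the reduction to homogeneous pieces: one has to be sure the grading on $B_s$ is well defined, i.e.\ that the decomposition is direct. This follows because the decomposition is the eigenspace decomposition of $D$ on $B_s$, and eigenvectors with distinct eigenvalues are linearly independent. Once that is in place, the monomial substitution finishes the argument directly. No transcendence argument like the one in Theorem~\ref{thm:kernel-fraction-field} is needed, since everything stays polynomial in $s^{\pm1}$.
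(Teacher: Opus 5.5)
Your proposal is correct and follows essentially the same route as the paper. Both arguments expand elements of $B_s$ in the monomials $b^\alpha s^{-m}$, note that these are semi-invariants, so the kernel is the weight-zero part, and observe that a weight-zero monomial equals $u_1^{\alpha_1}\cdots u_n^{\alpha_n}$. Your explicit use of the linear independence of eigenvectors with distinct eigenvalues makes precise a step the paper states loosely as ``every monomial appearing in $f$ has weight zero''. The monomials themselves need not be linearly independent, and it is grouping them by weight that makes that step rigorous.
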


\begin{proof}
Set
\[
u_i:=b_i s^{-\lambda_i}\in B_s.
\]
Since $D(s)=s$, one has
\[
D(s^{-1})=-s^{-1},
\]
and therefore
\[
D(u_i)=D(b_i)s^{-\lambda_i}+b_iD(s^{-\lambda_i})
=\lambda_i b_i s^{-\lambda_i}-\lambda_i b_i s^{-\lambda_i}=0.
\]
Hence
\[
k[u_1,\dots,u_n]\subseteq \ker(D|_{B_s}).
\]

Conversely, let $f\in B_s$. Since $B_s=k[b_1,\dots,b_n,s^{-1}]$,
the element $f$ can be written as a finite sum of monomials
\[
f=\sum_{\alpha,m} c_{\alpha,m}\, b^\alpha s^{-m},
\]
where $\alpha=(\alpha_1,\dots,\alpha_n)\in \Z_{\ge 0}^n$, $m\in\Z_{\ge 0}$, and
\[
b^\alpha=b_1^{\alpha_1}\cdots b_n^{\alpha_n}.
\]
Each monomial is a semi-invariant for $D$, and
\[
D(b^\alpha s^{-m})
=
\left(\sum_{i=1}^n \alpha_i\lambda_i-m\right)b^\alpha s^{-m}.
\]
Thus $f\in \ker(D|_{B_s})$ if and only if every monomial appearing in $f$ has weight zero, namely
\[
m=\sum_{i=1}^n \alpha_i\lambda_i.
\]
For such a monomial we get
\[
b^\alpha s^{-m}
=
\prod_{i=1}^n (b_i s^{-\lambda_i})^{\alpha_i}
=
u_1^{\alpha_1}\cdots u_n^{\alpha_n}.
\]
Therefore every element of $\ker(D|_{B_s})$ belongs to $k[u_1,\dots,u_n]$, and the result follows.
\end{proof}

\begin{corollary}\label{cor:kernel-intersection}
With the same notation,
\[
\ker(D)=\ker(D|_{B_s})\cap B
=
k\!\left[b_1s^{-\lambda_1},\dots,b_ns^{-\lambda_n}\right]\cap B.
\]
\end{corollary}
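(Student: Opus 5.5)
The plan is to establish the first equality $\ker(D)=\ker(D|_{B_s})\cap B$ by a direct double inclusion. The second equality then follows by substituting the description of $\ker(D|_{B_s})$ from Proposition~\ref{prop:kernel-localization}. Throughout, $B$ is identified with its image in $B_s$. This is legitimate because $B$ is a domain and $s\neq 0$: indeed $s=0$ would force $D(s)=s$ trivially, but the hypothesis is used with $s$ a unit in $B_s$, so we assume $s\neq 0$. Hence the canonical map $B\to B_s$ is injective.

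The key point is that the extension of $D$ to $B_s$ restricts to the original $D$ on $B$. This is exactly the uniqueness statement in the extension lemma for localizations. Concretely, for $b\in B$ viewed as $b/1$, the quotient-rule formula gives $D(b/1)=(D(b)\cdot 1-b\,D(1))/1=D(b)/1$.

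For the inclusion $\ker(D)\subseteq \ker(D|_{B_s})\cap B$, let $b\in B$ with $D(b)=0$. By the compatibility just noted, $D|_{B_s}(b)=0$, so $b$ lies in $\ker(D|_{B_s})$, and trivially $b\in B$. For the reverse inclusion, let $b\in B$ with $D|_{B_s}(b)=0$ in $B_s$. By compatibility this says $D(b)/1=0$ in $B_s$. Injectivity of $B\to B_s$ then gives $D(b)=0$ in $B$, so $b\in \ker(D)$.

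Finally, Proposition~\ref{prop:kernel-localization} states that $\ker(D|_{B_s})=k[b_1s^{-\lambda_1},\dots,b_ns^{-\lambda_n}]$. Intersecting both sides with $B$ yields the displayed description. There is no real obstacle here. The only point deserving care is the injectivity of $B\to B_s$, together with the agreement of the extended derivation with $D$ on $B$, and both follow from $B$ being a domain and from the uniqueness of the extension.
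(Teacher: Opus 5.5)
Your proposal is correct and follows the paper's argument. The first equality comes from a double inclusion using that the extended derivation agrees with $D$ on $B$, and the second from substituting Proposition~\ref{prop:kernel-localization}; you are slightly more explicit than the paper, which leaves implicit the injectivity of $B\to B_s$ (i.e.\ the assumption $s\neq 0$) and the appeal to that proposition.
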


\begin{proof}
The inclusion $\ker(D)\subseteq \ker(D|_{B_s})\cap B$
is immediate. Conversely, if
\[
f\in \ker(D|_{B_s})\cap B,
\]
then $f\in B$ and
$D(f)=D|_{B_s}(f)=0$,
so $f\in \ker(D)$.
\end{proof}

\end{document}